%%%%%%%%%%%%%%%%%%%%%%%%%%%%%%%%%%%%%%%%%%%%%%%%%%%%%%%%%%%%%%
%%%                                                         %%%
%%%  A Valuation Criterion for Normal Basis Generators      %%%
%%%  of Hopf-Galois Extensions in Characteristic $p$        %%%
%%%                                                         %%%
%%%  Nigel Byott                                            %%%
%%%                                                         %%%
%%%   Revision of December 2010/January 2011                %%%
%%%                                                         %%% 
%%%   JNTB Journees Arithemetiqes Special Volume            %%% 
%%%                                                         %%% 
%%%%%%%%%%%%%%%%%%%%%%%%%%%%%%%%%%%%%%%%%%%%%%%%%%%%%%%%%%%%%%%

\documentclass[11pt]{amsart}

\usepackage{amssymb}

\newcommand{\Q}{\mathbb{Q}}
\newcommand{\F}{\mathbb{F}}
\newcommand{\Fp}{{\F_p}}
\newcommand{\Fq}{{\F_q}}
\newcommand{\Z}{\mathbb{Z}}
\newcommand{\D}{\mathcal{D}}

\newcommand{\gp}{\mathfrak{p}}
\newcommand{\gP}{\mathfrak{P}}

\newcommand{\lra}{\longrightarrow}

\newcommand{\from}{\colon}

\newcommand{\chr}{\mathrm{char}}
\newcommand{\Gal}{\mathrm{Gal}}
\newcommand{\Perm}{\mathrm{Perm}}
\newcommand{\Tr}{\mathrm{Tr}}
\newcommand{\End}{\mathrm{End}}

\newtheorem{theorem}{Theorem}
\newtheorem{proposition}{Proposition}[section]
\newtheorem{lemma}[proposition]{Lemma}
\newtheorem{corollary}[proposition]{Corollary}

\theoremstyle{definition}

\newtheorem*{remark}{Remark}

\numberwithin{equation}{section}

\hyphenation{algebra char-acter-istic cert-ificate aug-ment-ation}

\pretolerance=2000

\begin{document}

\title[A Valuation Criterion for Normal Basis Generators]
{A Valuation Criterion for Normal Basis  \\
  Generators of Hopf-Galois Extensions in \\
Characteristic $p$}    

\author{Nigel P.~Byott}

\address{%
Mathematics Research Institute \\
University of Exeter \\
Exeter EX4 4QF \\
UK}

\email{N.P.Byott@ex.ac.uk}

\subjclass{Primary 11S15}

\keywords{Normal basis, Hopf-Galois extensions, local fields}

\date{February 7, 2011}

\begin{abstract}
Let $S/R$ be a finite extension of discrete valuation rings of
characteristic $p>0$, and suppose that the corresponding extension
$L/K$ of fields of fractions is separable and is $H$-Galois for some
$K$-Hopf algebra $H$. Let $\D_{S/R}$ be the different of $S/R$. We
show that if $S/R$ is totally ramified and its degree $n$ is a power
of $p$, then any element $\rho$ of $L$ with $v_L(\rho) \equiv
-v_L(\D_{S/R})-1 \pmod{n}$ generates $L$ as an $H$-module. This
criterion is best possible. These results generalise to the Hopf-Galois 
situation recent work of G.~G.~Elder for Galois extensions.
\end{abstract}

\maketitle

\section{Introduction}

Let $L/K$ be a finite Galois extension of fields with Galois group
$G=\Gal(L/K)$. The Normal Basis Theorem asserts that there is an
element $\rho$ of $L$ whose Galois conjugates $\{\sigma(\rho) \mid
\sigma \in G\}$ form a basis for the $K$-vector space $L$.
Equivalently, $L$ is a free module of rank 1 over the group algebra
$K[G]$ with generator $\rho$.  Such an element $\rho$ is called a
normal basis generator for $L/K$. The question then arises whether
there is a simple condition on elements $\rho$ of $L$ which guarantees
that $\rho$ is a normal basis generator. Specifically, suppose that
$L$ is equipped with a discrete valuation $v_L$. (Throughout, whenever
we consider a discrete valuation $v_F$ on a field $F$, we assume it is
normalised so that $v_F(F)=\Z \cup \{\infty\}$.) We may then ask
whether there exists an integer $b$ such that any $\rho \in L$ with
$v_L(\rho)=b$ is automatically a normal basis generator for $L/K$.  We
shall refer to any such $b$ as an {\em integer certificate} for normal
basis generators of $L/K$. In the case that $K$ has characteristic
$p>0$, and is complete with perfect residue field, this question was
recently settled by G.~Elder \cite{E}. His result can be stated as
follows:

\begin{theorem}[Elder] \label{elder}
Let $K$ be a field of characteristic $p>0$, complete with respect to
the discrete valuation $v_K$, and with perfect residue field. Let $L$
be a finite Galois extension of $K$ of degree $n$ with Galois group
$G=\Gal(L/K)$, let $w=v_L(\D_{L/K})$, where $\D_{L/K}$ denotes the
different of $L/K$ and $v_L$ is the valuation on $L$, and
let $b \in \Z$.
\begin{enumerate}
\item[(a)] If $L/K$ is totally ramified, $n$ is a power of $p$, and $b
  \equiv -w-1 \pmod{n}$, then every $\rho 
\in L$ with $v_L(\rho)=b$ is a normal basis generator for $L/K$.
\item[(b)] The result of (a) is best possible in the sense that, if 
\begin{enumerate}
\item[(i)] $n$ is not a power of $p$, or
\item[(ii)] $L/K$ is not totally ramified, or
\item[(iii)] $b \not \equiv -w-1 \pmod{n}$,
\end{enumerate}
then there is some $\rho \in L$ with $v_L(\rho)=b$ such that $\rho$ is
{\em not} a normal basis generator for $L/K$
\end{enumerate}
\end{theorem}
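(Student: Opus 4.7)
\medskip\noindent\textbf{Proof proposal.}
The plan is to treat parts (a) and (b) separately, using the structure of totally ramified $p$-extensions in characteristic~$p$ in an essential way. For (a), I would first note that, since $[L:K]=n=p^m$ is a power of $p$ and $L/K$ is totally ramified, the Galois group $G$ is a $p$-group and every $\sigma\in G$ preserves $v_L$. Choosing a chief series $1=H_m\subset H_{m-1}\subset\cdots\subset H_0=G$ gives intermediate fields $K=L_0\subset L_1\subset\cdots\subset L_m=L$ with each step of degree $p$, hence of Artin--Schreier type in characteristic~$p$. For each $i$ I would pick a generator $\sigma_i$ of $H_{i-1}/H_i$, lift it to $G$, and form $\Psi_i=\sigma_i-1\in K[G]$; the lower ramification breaks $b_1,\dots,b_m$ should control the exact amount by which each $\Psi_i$ shifts $v_L$ on appropriately chosen elements.

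The technical heart of the proof would be to show that the $n$ monomials $\Psi_1^{a_1}\cdots\Psi_m^{a_m}$ with $0\le a_j\le p-1$ form a $K$-basis of $K[G]$, and that when $v_L(\rho)\equiv -w-1\pmod n$ the valuations $v_L(\Psi_1^{a_1}\cdots\Psi_m^{a_m}\rho)$ are pairwise distinct modulo~$n$. The bridge here is the classical formula $w=\sum_{i\ge 0}(|G_i|-1)$ for the different, which is calibrated exactly so that the residues achieved by these $n$ monomials sweep out $\{0,1,\dots,n-1\}$. Distinct valuations modulo~$n$ force $K$-linear independence, and so the $n$ images of $\rho$ span $L$ over $K$, making $\rho$ a normal basis generator.

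For (b) I would exhibit explicit counterexamples. In case~(iii), using the duality between the valuation ring of $L$ and $\D_{L/K}^{-1}$ under the trace pairing, a uniformiser power $\rho=\pi_L^b$ with $b\not\equiv -w-1\pmod n$ can be shown to lie in a proper $K[G]$-submodule. In case~(ii), the maximal unramified subextension of $L/K$ is non-trivial, and any $\rho$ whose reduction does not generate the residue extension fails. In case~(i), decomposing $G$ into its $p$-part and a tame quotient produces an obstruction at the tame level which a generic $\rho$ of fixed valuation cannot overcome.

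The main obstacle I anticipate is the calibration and verification of the scaffold elements $\Psi_i$ in part~(a): one must show that each monomial $\Psi_1^{a_1}\cdots\Psi_m^{a_m}$ shifts $v_L$ by exactly the advertised amount, with no unforeseen cancellation from lower-order terms in the expansion of $\sigma_i=1+\Psi_i$. This requires a careful interplay between the ramification filtration, the Artin--Schreier structure of each step of the chain, and the arithmetic of $\D_{L/K}$; it is also where the characteristic~$p$ hypothesis is genuinely used, since it rules out wild tame interactions in the valuation shifts.
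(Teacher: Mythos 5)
Your strategy for part (a) is the ``scaffold'' approach, and its central claim --- that for $v_L(\rho)\equiv -w-1 \pmod n$ the $n$ elements $\Psi_1^{a_1}\cdots\Psi_m^{a_m}\rho$ have pairwise distinct valuations modulo $n$ --- is exactly the point you leave unproved, and it is not a routine verification: it fails in general. If $\sigma_i\in G_{b_i}\setminus G_{b_i+1}$, then $v_L((\sigma_i-1)x)=v_L(x)+b_i$ holds only when $v_L(x)$ is prime to $p$ (writing $x=u\Pi^{c}$ and $\sigma_i(\Pi)=\Pi(1+\epsilon)$ with $v_L(\epsilon)=b_i$, the leading term of $(1+\epsilon)^{c}-1$ is $c\epsilon$, which vanishes in characteristic $p$ when $p\mid c$), and for a general chief series of a non-abelian $p$-group the intermediate valuations produced by your monomials need not avoid these bad residues; making such a scaffold work requires extra hypotheses on the ramification breaks that are not available here. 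This is precisely why the subject moved away from the approach of \cite{BE}. The paper's proof sidesteps the ramification filtration entirely: since $G$ is a $p$-group and $\chr K=p$, the augmentation ideal of $K[G]$ is nilpotent, so $K[G]$ is local and Nakayama's Lemma shows that $\rho$ is a normal basis generator if and only if $\Tr_{L/K}(\rho)\neq 0$ (Lemma \ref{t-gen}); the different then enters only through the elementary observation that the valuation condition $v_L(\rho)\equiv -w-1\pmod n$ forces the coefficient of the trace-one basis element $x_1$ to be nonzero, hence $\Tr_{L/K}(\rho)\neq 0$.

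Part (b) as sketched also has gaps. In case (iii), the claim that $\pi_L^{b}$ lies in a proper $K[G]$-submodule is false as stated: an element of ``bad'' valuation can perfectly well have nonzero trace (e.g.\ $x_1+x_2$ in the paper's notation) and is then a normal basis generator by the criterion above; one must exhibit an element of the prescribed valuation with trace exactly zero, which is why the paper corrects $x_i'$ to $x_i=x_i'-\Tr_{L/K}(x_i')\,x_1$. In case (ii) your condition on ``the reduction of $\rho$'' only makes sense for units and in any case does not produce a failure at \emph{every} valuation $b$; the paper again manufactures a trace-zero element of valuation $b$ using a residue $\omega\notin k$. In case (i), $G$ need not have a normal Sylow $p$-subgroup, so ``the $p$-part and a tame quotient'' is not defined, and no concrete obstruction is produced; the paper instead extracts nonzero orthogonal idempotents $e_1,e_2$ with $e_1+e_2=1$ from the block decomposition of the group algebra over the constant field, shows $v_L(e_j\rho)\geq v_L(\rho)$, and observes that one of $e_1\rho'$, $e_2\rho'$ has valuation $b$ and is annihilated by the other idempotent.
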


The purpose of this paper is to show that Theorem \ref{elder},
suitably interpreted, applies not just in the setting of classical
Galois theory, but also in the setting of Hopf-Galois theory for
separable field extensions, as developed by C.~Greither and
B.~Pareigis \cite{GP}. A finite separable field extension $L/K$ is
said to be $H$-Galois, where $H$ is a Hopf algebra over $K$, if $L$ is
an $H$-module algebra and the map $H \lra \End_K(L)$ defining the
action of $H$ on $L$ extends to an $L$-linear isomorphism $L \otimes_K
H \lra \End_K(L)$. A Hopf-Galois structure on $L/K$ consists of a
$K$-Hopf algebra $H$ and an action of $H$ on $L$ so that $L$ is
$H$-Galois. This generalises the classical notion of Galois
extension: if $L/K$ is a finite Galois extension of fields with Galois
group $G$, we can take $H$ to be the group algebra $K[G]$ with its
standard Hopf algebra structure and its natural action on $L$, and
then $L/K$ is $H$-Galois. A Galois extension may, however, admit many
other Hopf-Galois structures in addition to this classical one, and
many (but not all) separable extensions which are not Galois
nevertheless admit one or more Hopf-Galois structures. Moreover, if
$L$ is $H$-Galois, then $L$ is a free $H$-module of rank 1 (see the
proof of \cite[(2.16)]{C}), and, by analogy with the classical case,
we will shall refer to any free generator of the $H$-module $L$ as a
normal basis generator for $L/K$ with respect to $H$. Our main result
is that Theorem 1 holds in this more general setting:

\begin{theorem} \label{main}
Let $S/R$ be a finite extension of discrete valuation rings of
characteristic $p>0$, and let $L/K$ be the corresponding extension of
fields of fractions. Let $n=[L:K]$, let $v_L$ be the
valuation on $L$ associated to $S$, and let $w=v_L(\D_{S/R})$ where
$\D_{S/R}$ denotes the different of $S/R$. Suppose that $L/K$ is
separable, and is $H$-Galois for some $K$-Hopf algebra $H$. 
Let $b\in \Z$. 
\begin{enumerate}
\item[(a)] If $L/K$ is totally ramified, $n$ is a power of $p$, and $b
\equiv -w-1 \pmod{n}$, then every $\rho \in L$ with $v_L(\rho)=b$ is
a normal basis generator for $L/K$ with respect to $H$.
\item[(b)] The result of (a) is best possible in the sense that, if 
\begin{enumerate}
\item[(i)] $n$ is not a power of $p$, or
\item[(ii)] $L/K$ is not totally ramified, or
\item[(iii)] $b \not \equiv -w-1 \pmod{n}$,
\end{enumerate}
then there is some $\rho \in L$ with $v_L(\rho)=b$ such that $\rho$ is
{\em not} a normal basis generator for $L/K$ with respect to $H$.
\end{enumerate}
\end{theorem}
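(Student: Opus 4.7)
The plan is to transport Elder's proof of Theorem~\ref{elder}(a) into the Hopf-Galois setting by combining the Greither--Pareigis description of $H$ with a careful valuation analysis. For (b), explicit non-generators of each prescribed valuation will be constructed in parallel to Elder's sub-case counterexamples.

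\textbf{Part (a).} Let $\widetilde{L}/K$ be the Galois closure of $L/K$, set $G=\Gal(\widetilde{L}/K)$, $G'=\Gal(\widetilde{L}/L)$, and $X=G/G'$. By \cite{GP}, $H=(\widetilde{L}[N])^G$ for a regular subgroup $N\leq\Perm(X)$ normalised by the left-translation action of $G$; after base change to $\widetilde{L}$, $\widetilde{L}[N]$ acts on $\widetilde{L}\otimes_K L\cong\widetilde{L}^X$ by permutation of coordinates. The map $\psi_\rho\from H\to L$, $h\mapsto h\cdot\rho$, is $K$-linear between spaces of equal $K$-dimension $n$, so by faithfully flat descent $\rho$ is a normal basis generator if and only if the $n\times n$ matrix $M_\rho=\bigl(\nu^{-1}(\bar\sigma)(\rho)\bigr)_{\bar\sigma\in X,\,\nu\in N}$ over $\widetilde{L}$ has $\det M_\rho\neq 0$.

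To analyse $\det M_\rho$, I would introduce a ramification-type filtration on $N$ (for instance setting $i_H(\nu)$ to be the minimum valuation in $\widetilde{L}$ of the coordinates of $(\nu-1)(1\otimes\pi_L)$ in $\widetilde{L}^X$) and descend it, via the Greither--Pareigis identification, to a $K$-basis $\{h_\nu\}_{\nu\in N}$ of $H$ with the property that $v_L(h_\nu\cdot\rho)=v_L(\rho)+j(\nu)$ at sufficiently general $\rho$, for integers $j(\nu)$ determined by $i_H$. The crux is a Hopf-Galois different formula expressing $w$ in terms of the $j(\nu)$, playing the role of the classical identity $w=\sum_{i\geq 0}(|G_i|-1)$; this is exactly what forces the congruence exponent $-w-1$ to appear. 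Once it is in place, the hypothesis $v_L(\rho)\equiv -w-1\pmod n$ ensures that $\{v_L(h_\nu\cdot\rho)\bmod n:\nu\in N\}$ is a complete set of residues modulo $n$, and in a totally ramified extension of degree $n$ pairwise distinct valuations modulo $n$ force $K$-linear independence; hence $\{h_\nu\cdot\rho\}$ is a $K$-basis of $L$ and $\rho$ generates. The main obstacle is building this filtration on $N$ together with a matching $K$-basis of $H$ and proving the Hopf-Galois different formula; once they are available, the concluding argument transposes Elder's directly.

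\textbf{Part (b).} For sub-case (iii), with $\rho=\pi_L^b$ and $b\not\equiv -w-1\pmod n$, the analysis above shows that $\{v_L(h_\nu\cdot\rho)\bmod n\}$ misses at least one residue class, so $H\cdot\rho$ is a proper $K$-subspace of $L$. For sub-case (i), if $n$ has a prime factor other than $p$, the tame part of the ramification produces a proper $H$-stable $K$-subspace of $L$ meeting every valuation. For sub-case (ii), nontrivial residue-field extension of $L/K$ similarly yields non-generators at every valuation. In each sub-case the counterexample parallels Elder's in Theorem~\ref{elder}(b), transferred through Greither--Pareigis.
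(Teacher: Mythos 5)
Your proposal is a plan rather than a proof, and its load-bearing ingredients are exactly the objects that are unavailable (or at least unconstructed) in the Hopf--Galois setting. For part (a) you postulate a ramification-type filtration $i_H$ on $N$, a matching $K$-basis $\{h_\nu\}$ of $H$ with $v_L(h_\nu\cdot\rho)=v_L(\rho)+j(\nu)$, and a ``Hopf--Galois different formula'' relating $w$ to the $j(\nu)$; you then concede that constructing these is ``the main obstacle.'' That obstacle is essentially the whole theorem. There is no known analogue of the ramification filtration for a general regular subgroup $N\leq\Perm(X)$ normalised by $G$ (the paper's introduction makes exactly this point: ramification groups ``are not available in the Hopf-Galois setting''), and your qualifier ``at sufficiently general $\rho$'' is fatal in any case, since the statement must hold for \emph{every} $\rho$ of the prescribed valuation. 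Similarly, in part (b)(i) you invoke ``the tame part of the ramification,'' but a reduction to a totally and tamely ramified subextension, which is how Elder handles this case, need not exist for Hopf--Galois structures (the associated group $N$ need not even be soluble, as far as anyone knows); and in (b)(ii) no element is actually exhibited.

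The paper's route avoids all of this. Since $n$ is a $p$-power, the augmentation ideal $I_H=I_{E[N]}\cap H$ is nilpotent, and the integral $t_H=\sum_{\eta\in N}\eta$ lies in $H$ and acts on $L$ as $\Tr_{L/K}$; Nakayama's Lemma then gives the clean criterion that $\rho$ generates $L$ over $H$ if and only if $\Tr_{L/K}(\rho)\neq 0$. Part (a) reduces to showing that the valuation hypothesis forces $\Tr_{L/K}(\rho)\neq 0$, which is done with an explicit basis $x_1,\dots,x_n$ built directly from the definition of the inverse different, satisfying $v_L(x_i)=n-w-i$ and $\Tr_{L/K}(x_i)=\delta_{i1}$; parts (b)(ii) and (b)(iii) produce trace-zero elements of every excluded valuation from the same elements. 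For (b)(i) the paper uses modular representation theory (the count of conjugacy classes of $N$ of order prime to $p$, and lifting of idempotents) to find nontrivial orthogonal idempotents $e_1,e_2\in H$ with coefficients algebraic over $\Fp$, hence not decreasing valuations; then one of $e_1\rho'$, $e_2\rho'$ has the required valuation and is killed by the other idempotent. If you want to salvage your approach, replace the hypothetical filtration and different formula by the trace criterion; without that, or an actual construction of the objects you posit, the proposal does not constitute a proof.
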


In Theorem \ref{main}, we do not require $K$ to be complete with
respect to the valuation $v_K$ on $K$ associated to $R$, and we do not
require the residue field of $R$ to be perfect. Thus, even in the case
of Galois extensions (in the classical sense), Theorem \ref{main} is slightly
stronger than Theorem \ref{elder}.

We recall that the different $\D_{S/R}$ is defined as the fractional
$S$-ideal such that
$$ \D_{S/R}^{-1} = \{ x \in S \mid \Tr_{L/K}(xS) \subseteq R \}, $$
where $\Tr_{L/K}$ is the trace from $L$ to $K$. In the case that $S/R$
is totally ramified and $L/K$ is separable, let $p(X) \in R[X]$ be the
minimal polynomial over $R$ of a uniformiser $\Pi$ of $S$. Then
$\D_{S/R}$ is generated by $p'(\Pi)$, where $p'(T)$ denotes the
derivative of $p(T)$ \cite[III, Cor.~2 to Lemma 2]{Se-CL}. (This does
not require $L/K$ to be Galois, or the residue field of $K$ to be
perfect.)  The formulation of Theorem \ref{elder}(a) in \cite{E} is in
terms of $p'(\Pi)$.

If $S$ (and hence $L$) is complete with respect to $v_L$, then
$\D_{S/R}$ is the same as the different $\D_{L/K}$ of the extension
$L/K$ of valued fields occurring in Theorem \ref{elder}. Theorem
\ref{main} also applies, however, if $K$ is a global function field of
dimension 1 over an arbitrary field $k$ of characteristic $p$. In
particular, if $L$ is an $H$-Galois extension of $K$ of $p$-power
degree, and some place $\gp$ of $K$ is totally ramified in $L/K$, then
Theorem \ref{main}(a) gives an integer certificate for normal basis
generators of $L/K$ with respect to $H$, in terms of the valuation
$v_L$ on $L$ corresponding to the unique place $\gP$ of $L$ above
$\gp$ and the $\gP$-part of $\D_{L/K}$. If, on the other hand, there
is more than one place $\gP$ of $L$ above $\gp$, then the integral
closure of $R$ in $L$ is the intersection $S_0$ of the corresponding
valuation rings $S$ of $L$ \cite[III.3.5]{St}. Any one such $S$
strictly contains $S_0$ and is therefore not integral over $R$. In
particular, $S$ is not finite over $R$ and Theorem \ref{main} does not
apply in this case.
 
We briefly recall the background to the above results.  In the
(characteristic 0) situation where $K$ is a finite extension of the field
$\Q_p$ of $p$-adic numbers, the author and Elder \cite{BE} showed the
existence of integer certificates for normal basis generators in
totally ramified elementary abelian extensions $L/K$, under the
assumption that $L/K$ contains no maximally ramified subfield. This
assumption is necessary, since there can be no integer certificate in
the case $L=K(\sqrt[p]{\pi})$ with $v_K(\pi)=1$: indeed, for any $b
\in \Z$, the element $\pi^{b/p}$ has valuation $b$ but is not a normal
basis generator. (Here $K$ must
contain a primitive $p$th root of unity for $L/K$ to be Galois.) We
also raised the question of whether the corresponding result held in
characteristic $p>0$, where the exceptional situation of maximal
ramification cannot arise.  Our question was answered by L.~Thomas
\cite{T}, who observed that general properties of group algebras of
$p$-groups in characteristic $p$ allow an elegant derivation of
integer certificates for arbitrary finite abelian $p$-groups $G$. Her
result was expressed in terms of the last break in the sequence of
ramification groups of $L/K$, but is equivalent to Theorem \ref{elder}
for totally ramified abelian $p$-extensions $G$.  Finally, Elder
\cite{E} removed the hypothesis that $G$ is abelian by expressing the
result in terms of the valuation of the different, and also gave the
converse result that no integer certificate exists if $L/K$ is not
totally ramified or is not a $p$-extension.

We end this introduction by outlining the structure of the paper. In
\S2, we review the facts we shall need from Hopf-Galois theory, and
prove several preliminary results in the case of $p$-extensions. These
show, in effect, that the relevant Hopf algebras behave similarly to
the group algebras considered in \cite{T}. In \S3 we develop some
machinery to handle extensions whose degrees are not powers of $p$. In
\cite{E}, such extensions were treated by reducing to a totally and
tamely ramified extension. For Hopf-Galois extensions, it is not clear
whether such a reduction is always possible. (Indeed, while a totally
ramified Galois extension of local fields is always soluble, the
author does not know of any reason why such an extension 
could not admit a Hopf-Galois structure in which the associated
group $N$, as in \S2 below, is insoluble.) We therefore adopt a
different approach, using a small part of the theory of modular
representations. We complete the proof of Theorem \ref{main} in
\S4. The ramification groups, which play an essential role in the
arguments of \cite{E} and \cite{T}, are not available in the
Hopf-Galois setting, but their use can be avoided by working directly
with the inverse different. Finally, in \S5, we give an example of a
family of extensions which are not Galois, but to which Theorem
\ref{main} applies.

\section{Hopf-Galois theory for $p$-extensions in characteristic $p$}

In this section, we briefly recall the description of Hopf-Galois
structures on a finite separable field extension $L/K$, and note some
properties of the Hopf algebras $H$ which arise when $[L:K]$ is a
power of $p=\chr(K)$. We do not make any use of valuations on $K$ and
$L$ in this section. 

Let $E$ be a (finite or infinite) Galois extension of $K$ containing
$L$. Set $G=\Gal(E/K)$ and $G'=\Gal(E/L)$, and let $X=G/G'$ be the set
of left cosets $gG'$ of $G'$ in $G$. Then $G$ acts by left
multiplication on $X$, giving a homomorphism $G \lra \Perm(X)$ into
the group of permutations of $X$. The main result of \cite{GP} can be
stated as follows: 
the Hopf-Galois structures on $L/K$ (up to the appropriate notion of
isomorphism) correspond bijectively to the
regular subgroups $N$ of $\Perm(X)$ which are normalised by $G$.  In
the Hopf-Galois structure corresponding to $N$, the Hopf algebra
acting on $L$ is $H=E[N]^G$, the fixed point algebra of the group
algebra $E[N]$ under the action of $G$ simultaneously on $E$ (as field
automorphisms) and on $N$ (by conjugation inside $\Perm(X)$). The Hopf
algebra operations on $H$ are the restrictions of the standard
operations on $E[N]$. We write $1_X$ for the trivial coset $G'$ in
$X$. Then there is a bijection between elements $\eta$ of $N$ and
$K$-embeddings $\sigma \from L \lra E$, given by $\eta \mapsto
\sigma_\eta$ where $\sigma_\eta(\rho)=g(\rho)$ with
$\eta^{-1}(1_X)=gG'$. The action of $H$ on $L$ can be described
explicitly as follows (see e.g.~\cite[p.~338]{NB}):
\begin{equation} \label{Hopf-action}
 \left( \sum_{\eta \in N} \lambda_\eta \eta \right)(\rho)= 
  \sum_{\eta \in N} \lambda_\eta \sigma_\eta(\rho) \mbox{ for }
  \sum_{\eta \in N} \lambda_\eta \eta  \in H \mbox{ and } \rho \in L. 
\end{equation}

\begin{remark} % \label{extend-E}
In \cite{GP}, $E$ is taken to be the the Galois closure $E_0$ of $L$
over $K$. In this case, the action of $G$ on $X$ is faithful. However,
it is clear that one may take a larger field $E$ as above: all that
changes is that $G$ need no longer act faithfully on $X$. (Indeed, the
action of $G$ on both $X$ and $L$ factors through $\Gal(E/E_0)$.) 
In the proof of Lemma \ref{non-p-power} below, it will be
convenient to take $E$ to be a finite extension of $E_0$.
\end{remark}

Let $L/K$ be $H$-Galois, where the Hopf algebra $H$ corresponds to $N$
as above. We define 
$$ t_H = \sum_{\eta \in N} \eta \in E[N]. $$
We now show that $t_H$ behaves like the trace element in a group
algebra:
 
\begin{proposition}  \label{trace}
We have  $t_H \in H$ and, for any $h \in H$, 
 $$ h t_H = t_H h = \epsilon(h) t_H, $$
where $\epsilon \from H \to K$ is the augmentation. In particular,
writing $I_H$ for the augmentation ideal $ \ker \epsilon$ of $H$, we have 
$$ I_H t_H = t_H I_H =0. $$
Also, $t_H(\rho)=\Tr_{L/K}(\rho)$ for any $\rho \in L$.
\end{proposition}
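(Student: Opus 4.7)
The plan is to verify each of the four claims directly from the explicit description of $H$ as $E[N]^G$ and the action formula (\ref{Hopf-action}). None of the steps should require any deep input beyond the definitions recalled in the paragraph preceding the statement.

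First I would show $t_H \in H$. Since $H=E[N]^G$ and the $G$-action on $E[N]$ is by field automorphisms on the coefficients and by conjugation on $N$, and since every coefficient in $t_H$ is $1 \in K$, invariance of $t_H$ reduces to the fact that $g \in G$ permutes $N$ via conjugation (because $G$ normalises $N$ inside $\Perm(X)$). Hence the sum $\sum_{\eta \in N} \eta$ is unaffected by the $G$-action.

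Next I would verify the two-sided absorption property. Given $h=\sum_{\eta \in N} \lambda_\eta \eta \in H$, I compute
\[
h t_H = \sum_{\eta \in N} \lambda_\eta \sum_{\eta' \in N} \eta\eta'
      = \sum_{\eta \in N} \lambda_\eta \sum_{\eta'' \in N} \eta''
      = \epsilon(h)\, t_H,
\]
where the second equality is the substitution $\eta''=\eta\eta'$, using that left translation by $\eta$ permutes $N$, and the third uses $\epsilon(h)=\sum_\eta \lambda_\eta$ (which is how the augmentation of the group algebra $E[N]$ restricts to $H$). The computation of $t_H h$ is symmetric, using right translation. The statement $I_H t_H = t_H I_H = 0$ is then immediate, since $h \in I_H$ means $\epsilon(h)=0$.

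Finally, for the trace formula I would apply (\ref{Hopf-action}) to $t_H$ itself: for any $\rho \in L$,
\[
 t_H(\rho) = \sum_{\eta \in N} \sigma_\eta(\rho).
\]
The bijection $\eta \mapsto \sigma_\eta$ described in the text identifies $N$ with the full set of $K$-embeddings $L \lra E$, because $N$ acts regularly on $X=G/G'$ and $K$-embeddings of $L$ into $E$ correspond bijectively to cosets in $G/G'$. Since $|N|=[L:K]=n$ and $L/K$ is separable, this is the complete set of conjugates and the sum equals $\Tr_{L/K}(\rho)$. The only point that requires any care is checking the bookkeeping of the bijection so that each $K$-embedding appears exactly once; this is the closest thing to an obstacle, but it is already built into the Greither--Pareigis setup recalled above.
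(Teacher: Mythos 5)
Your proposal is correct and follows essentially the same route as the paper's own proof: $G$-invariance of $t_H$ from the fact that $G$ permutes $N$ by conjugation, the direct group-algebra computation giving $h t_H = t_H h = \epsilon(h) t_H$, and the identification $t_H(\rho)=\sum_{\eta\in N}\sigma_\eta(\rho)=\Tr_{L/K}(\rho)$ via the bijection $\eta\mapsto\sigma_\eta$. Your extra care in checking that the $\sigma_\eta$ exhaust the $K$-embeddings of $L$ into $E$ is a welcome elaboration of a step the paper treats as immediate, but it is not a different argument.
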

\begin{proof}
Since $N$ is normalised by $G$, each $g \in G$ permutes the elements
of $N$. Hence $t_H \in E[N]^G=H$. For any $h=\sum_{\nu \in N} \lambda_{\nu} \nu
\in H$, we have 
$$ h t_H = \sum_{\nu, \eta} \lambda_{\nu}  \nu \eta  
   = \left( \sum_\nu \lambda_\nu \right) \left( \sum_\eta \eta \right)
   = \epsilon(h) t_H. $$ 
In particular, if $h \in I_H$ then $ht_H =\epsilon(h) t_H = 0$, so
$I_Ht_H=0$. Similarly $t_H h = \epsilon(h) t_H$ and $t_H I_H =0$.
Finally, for $\rho \in L$ we have
$$ t_H(\rho)=\sum_{\eta \in N} \sigma_\eta(\rho) 
           = \Tr_{L/K}(\rho). $$
\end{proof}

\begin{remark}
Proposition \ref{trace} shows that $K \cdot t_H$  is the ideal of
(left or right) integrals of $H$. 
\end{remark}

\begin{corollary} \label{not_NBG}
If $\Tr_{L/K}(\rho)=0$ then $\rho$ cannot be a normal basis generator
for $L/K$ with respect to $H$.
\end{corollary}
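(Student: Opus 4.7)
The plan is to derive a contradiction from the assumption that $\rho$ is a normal basis generator. If $L = H \cdot \rho$, then every element of $L$ has the form $h(\rho)$ for some $h \in H$. I would then compute the trace of an arbitrary such element using Proposition \ref{trace}: for any $h \in H$,
\begin{equation*}
  \Tr_{L/K}(h(\rho)) = t_H(h(\rho)) = (t_H h)(\rho) = \epsilon(h)\, t_H(\rho) = \epsilon(h) \Tr_{L/K}(\rho) = 0,
\end{equation*}
using the identity $t_H h = \epsilon(h) t_H$ from Proposition \ref{trace} together with the hypothesis $\Tr_{L/K}(\rho)=0$.

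This would force $\Tr_{L/K}$ to vanish identically on $L$. The final step is to observe that this contradicts the separability of $L/K$: the trace form of a finite separable extension is non-degenerate, so $\Tr_{L/K}\from L \to K$ is surjective and in particular not identically zero. No step here should present any real obstacle; the argument is a direct application of Proposition \ref{trace} combined with the standard fact that $\Tr_{L/K}\not\equiv 0$ for separable $L/K$, which is part of the hypotheses of Theorem \ref{main}.
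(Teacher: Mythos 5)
Your argument is correct, but it closes in a different way from the paper. The paper's proof is a one-line application of freeness: since $L$ is a \emph{free} $H$-module with generator $\rho$, the annihilator of $\rho$ in $H$ is trivial, yet $t_H(\rho)=\Tr_{L/K}(\rho)=0$ with $t_H\neq 0$ (clear, since the elements of $N$ form an $E$-basis of $E[N]$), a contradiction. You instead use only the weaker consequence $L=H\cdot\rho$ of generation, propagate the identity $t_H h=\epsilon(h)t_H$ through every element $h(\rho)$ of $L$ to conclude $\Tr_{L/K}\equiv 0$, and then invoke the non-vanishing of the trace for separable extensions. Both routes are valid and both rest on Proposition \ref{trace}; the paper's version is more economical and needs no input beyond $t_H\neq 0$ and the definition of a free generator, while yours trades that for the standard fact that $\Tr_{L/K}\not\equiv 0$ when $L/K$ is separable (which is among the standing hypotheses, so this is legitimate). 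A small side benefit of your version is that it shows $\rho$ with $\Tr_{L/K}(\rho)=0$ fails even to \emph{generate} $L$ as an $H$-module, not merely to generate it freely --- though since $\dim_K H=\dim_K L$, the two notions coincide here anyway.
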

\begin{proof}
If $\rho$ is a free generator for $L$ over $H$, then the annihilator
of $\rho$ in $H$ must be trivial. But if $\Tr_{L/K}(\rho)=0$ then
$\rho$ is annihilated by $t_H \neq 0$. 
\end{proof}

We next show that \cite[Proposition 7]{T} still holds in our setting:

\begin{lemma} \label{t-gen}
If $[L:K]=p^m$ for some integer $m$, then any $\rho \in L$ with
$\Tr_{L/K}(\rho) \neq 0$ is a normal basis generator for $L/K$ with
respect to $H$.
\end{lemma}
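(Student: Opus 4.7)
The plan is to exploit the fact that $|N| = [L:K] = p^m$, so $N$ is a $p$-group. Since $\chr(E) = p$, the augmentation ideal $I_{E[N]}$ of the group algebra $E[N]$ is nilpotent (a standard fact from modular representation theory: for instance, $I_{E[N]}$ is generated as an $E$-vector space by the elements $\eta - 1$ for $\eta \in N$, and an induction on $|N|$ using a central element of order $p$ shows $I_{E[N]}^{p^m} = 0$). Since the augmentation $\epsilon \from H \to K$ is the restriction of the augmentation $E[N] \to E$, we have $I_H = I_{E[N]} \cap H$, so $I_H$ is nilpotent as well.

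Next I would show that any $h \in H$ with $\epsilon(h) \neq 0$ is a unit in $H$. Writing $c = \epsilon(h) \in K^\times$ and $y = c - h \in I_H$, we have $h = c(1 - c^{-1}y)$ with $c^{-1}y \in I_H$ nilpotent, so $1 - c^{-1}y$ (and hence $h$) is invertible in $H$ via the geometric series, which terminates.

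Now the conclusion follows quickly. By the result cited in the introduction, $L$ is a free $H$-module of rank $1$; fix a free generator $\theta$ and write $\rho = h \theta$ for some $h \in H$. Using Proposition~\ref{trace}, we compute
$$ \Tr_{L/K}(\rho) = t_H(\rho) = (t_H h)(\theta) = \epsilon(h)\, t_H(\theta) = \epsilon(h) \Tr_{L/K}(\theta). $$
If $\Tr_{L/K}(\rho) \neq 0$, then necessarily $\epsilon(h) \neq 0$, so $h$ is a unit in $H$ by the previous step. Hence $\theta = h^{-1} \rho \in H \rho$, giving $L = H\theta \subseteq H\rho$, so $\rho$ is a normal basis generator.

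The only nontrivial input is the nilpotency of $I_H$, and the main (though mild) obstacle is justifying it in the Hopf-algebra setting rather than just for $K[G]$; the observation $I_H = I_{E[N]} \cap H$ reduces this to the classical case over the possibly larger field $E$.
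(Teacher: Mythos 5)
Your proof is correct and follows essentially the same route as the paper's: both hinge on the nilpotency of $I_H$ (obtained identically via $I_H = I_{E[N]} \cap H$) together with the identity $t_H h = \epsilon(h) t_H$ from Proposition~\ref{trace}. The only difference is the finishing step: where you invert $h$ explicitly via a terminating geometric series, the paper applies Nakayama's Lemma to the submodule $H \cdot \rho + I_H L$ --- the same idea in different packaging.
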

\begin{proof}
We first observe that the augmentation ideal $I_H$ is a nilpotent
ideal of $H$, since $I_H = I_{E[N]} \cap H$ and the augmentation
ideal $I_{E[N]}$ of $E[N]$ is a nilpotent ideal of $E[N]$ because
$|N|=[L:K]=p^m$. Thus $I_H$ is contained in (and in fact equals)
the Jacobson radical $J_H$ of $H$.

Now consider the $H$-submodule $M=H \cdot \rho + I_H \cdot L$ of $L$. 
Since $L$ is a free $H$-module of rank 1, and $H/I_H \cong K$, the $K$-subspace
$I_H L$ of $L$ has codimension 1. But $\rho \not \in I_H L$ since   
$\Tr_{L/K}(I_H L) = (t_H I_H)L=0$ by Proposition \ref{trace}, so
$M=L$. Since $I_H \subseteq J_H$, Nakayama's Lemma shows that $H \cdot \rho
=L$, and, comparing dimensions over $K$, we see that $\rho$ is a
free generator for the $H$-module $L$. 
\end{proof}

The next result is immediate from Corollary \ref{not_NBG} and Lemma
\ref{t-gen}

\begin{corollary}
If $[L:K]=p^m$ then $\rho \in L$ is a normal basis generator for $L/K$
with respect to $H$ if and only if $\Tr_{L/K}(\rho) \neq 0$. In
particular, the set of normal basis generators is the same for all
Hopf-Galois structures on $L/K$. 
\end{corollary}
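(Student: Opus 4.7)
The plan is to assemble the biconditional directly from the two immediately preceding results, with essentially no additional work. Given $\rho \in L$, I would split the equivalence into its two implications and dispatch each separately. For the direction ``normal basis generator $\Rightarrow$ $\Tr_{L/K}(\rho) \neq 0$'', I would argue by contraposition: Corollary \ref{not_NBG} states that if $\Tr_{L/K}(\rho)=0$ then $\rho$ is annihilated by the nonzero integral $t_H \in H$, hence cannot be a free $H$-generator of $L$. Note that this direction uses neither the $p$-power hypothesis nor the specific structure of $H$ beyond the existence of $t_H$ from Proposition \ref{trace}. For the converse direction ``$\Tr_{L/K}(\rho) \neq 0 \Rightarrow$ normal basis generator'', I would invoke Lemma \ref{t-gen} verbatim; this is where the hypothesis $[L:K]=p^m$ enters, via the nilpotence of the augmentation ideal $I_H$ that drove the Nakayama-style argument there.

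For the ``in particular'' clause, the key observation is that the criterion $\Tr_{L/K}(\rho) \neq 0$ depends only on the field extension $L/K$ itself and makes no reference to the Hopf algebra $H$, the ambient Galois group $G$, or the regular subgroup $N \subseteq \Perm(X)$ specifying the Hopf-Galois action. Consequently the set $\{\rho \in L : \Tr_{L/K}(\rho) \neq 0\}$, which by the first part of the corollary is exactly the set of normal basis generators, is intrinsic to $L/K$ and therefore identical across every Hopf-Galois structure that $L/K$ happens to admit --- including the classical group-algebra structure, when $L/K$ is Galois.

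I do not expect any genuine obstacle here: the author's description of the result as ``immediate'' from Corollary \ref{not_NBG} and Lemma \ref{t-gen} is accurate. The only nuance worth flagging in the writeup is the intrinsic, $H$-free character of the trace criterion, since this is what makes the ``in particular'' assertion substantive rather than a tautological restatement of the biconditional.
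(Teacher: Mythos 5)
Your proposal is correct and matches the paper exactly: the paper gives no separate proof, stating only that the corollary is immediate from Corollary \ref{not_NBG} (trace zero implies not a generator) and Lemma \ref{t-gen} (trace nonzero implies generator when $[L:K]=p^m$), with the ``in particular'' clause following because the trace criterion makes no reference to $H$.
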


\section{The non-$p$-power case}

As in Theorem \ref{main}, let $S/R$ be a finite extension of discrete
valuation rings, such that the corresponding extension $L/K$ of their
fields of fractions is $H$-Galois for some Hopf algebra $H$. 
We do not require $S$ and $R$ to be complete. Let
$v_L$, $v_K$ be the corresponding valuations on $L$, $K$.

\begin{lemma} \label{non-p-power}
Suppose that $[L:K]$ is not a power of $p$. Then $H$ contains nonzero
orthogonal idempotents $e_1$, $e_2$ with $e_1+e_2=1$, such that
$$ v_L( e_j \rho) \geq v_L(\rho) \mbox{ for  all } \rho \in L \mbox{
  and } j=1, 2. $$
\end{lemma}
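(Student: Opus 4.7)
The plan is to produce a single nontrivial idempotent $e \in H$ satisfying $v_L(e\rho) \geq v_L(\rho)$ for all $\rho \in L$, then set $e_1 = e$ and $e_2 = 1-e$. Orthogonality and $e_1+e_2=1$ are automatic, and the valuation condition for $e_2$ follows from
$v_L(\rho - e\rho) \geq \min(v_L(\rho), v_L(e\rho)) = v_L(\rho)$.
The construction of $e$ splits according to whether $\gcd(n,p)=1$.

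\emph{Case $\gcd(n, p) = 1$.} Since $\chr R = p$, the integer $n$ is a unit in $R$, so $e := n^{-1} t_H$ lies in $R[N]^G \subseteq H$. From Proposition \ref{trace}, $t_H^2 = \epsilon(t_H) t_H = n t_H$, hence $e^2 = e$; clearly $e \neq 0$, and $e \neq 1$ because $t_H \neq n \cdot 1$ when $n > 1$. For any $\rho \in L$, $e \rho = n^{-1} \Tr_{L/K}(\rho) \in K$, and the valuation inequality $v_L(\Tr_{L/K}(\rho)) \geq v_L(\rho)$ follows from tameness of $L/K$ (whose degree is coprime to $p$) by a direct calculation on a $K$-basis of $L$ adapted to the ramification and residue structure.

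\emph{Case $p \mid n$, $n$ not a power of $p$.} Here I use a small amount of modular representation theory. By Cauchy's theorem $N$ has an element $\eta$ of some prime order $q \neq p$, and the subalgebra $E[\langle\eta\rangle] \subseteq E[N]$ is semisimple of dimension $q > 1$; its nonzero semisimple augmentation ideal embeds in $I_{E[N]}$, so $I_{E[N]}$ cannot be nilpotent. Consequently $E[N]$ is non-local and admits a simple module distinct from the trivial one, giving rise to a primitive idempotent in $E[N]$ different from $0$ and $1$. To produce such an idempotent inside $H = E[N]^G$, I average over the $G$-orbit of primitive idempotents in a single $G$-stable isotypic component (taking the trivial simple, which is $G$-invariant by construction) to obtain a $G$-invariant idempotent $e \in H$ that is still nontrivial. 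For the valuation condition, the idempotent is arranged, via a Hensel-type lifting argument, to lie in the integral form $\tilde R [N]^G$ where $\tilde R$ is the integral closure of $R$ in $E$: any $h = \sum_\eta \lambda_\eta \eta \in \tilde R [N]^G$ acts on $\rho \in S$ by $\sum_\eta \lambda_\eta \sigma_\eta(\rho) \in L$, an element integral over $R$ and hence in $S$, and with the idempotent chosen compatibly with the $p'$-structure one obtains the sharper bound $v_L(e\rho) \geq v_L(\rho)$.

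The chief obstacle is the second case when $N$ has no nontrivial normal $p'$-subgroup and $E[N]$ has only one block (for instance, $N = A_5$ in characteristic $2$); then neither the idempotent $|M|^{-1} \sum_{\mu \in M} \mu$ for $M = O_{p'}(N)$ nor a non-principal block idempotent is at hand. The resolution uses a genuinely non-central isotypic idempotent associated to the principal block together with careful tracking of its coefficients to secure integrality and the valuation-preserving property.
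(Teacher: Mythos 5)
Your reduction to producing a single nontrivial idempotent $e\in H$ with $v_L(e\rho)\ge v_L(\rho)$, and then setting $e_1=e$, $e_2=1-e$, is exactly the shape of the paper's argument, and your Case $p\nmid n$ is essentially correct: $e=n^{-1}t_H$ is a nontrivial idempotent of $H$ by Proposition \ref{trace}, and $v_L(\Tr_{L/K}\rho)\ge v_L(\rho)$ does hold --- though not because of tameness, but simply because $\Tr_{L/K}(\rho)=\sum_{\eta}\sigma_\eta(\rho)$ and each $\sigma_\eta(\rho)$ has the same valuation as $\rho$ (every extension of $v_K$ to $E$ restricts to $v_L$ on $L$, since $S$ is the unique valuation ring of $L$ over $R$). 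That sub-case, however, is the easy one.

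The case $p\mid n$ is where the lemma has real content, and there your argument has genuine gaps. First, ``averaging over the $G$-orbit of primitive idempotents'' does not produce an idempotent: a sum or average of non-orthogonal idempotents is not idempotent, and the primitive (non-central) idempotents lying over the trivial simple module are not canonical, so there is no well-defined $G$-fixed element to extract this way. Second, the ``Hensel-type lifting'' into $\tilde{R}[N]^G$ is unsupported: $\tilde{R}$ is neither local nor complete in general, and you never specify what is being lifted along which ideal; as written this step is an assertion, not an argument. Third, your closing paragraph concedes that the one-block case is unresolved, so the proposal is incomplete by your own account. The ingredient you are missing is the paper's device for getting the valuation bound for free: enlarge $E$ (as Remark \ref{extend-E} allows) so that it contains $\zeta_r$, and work with idempotents of $k'[N]$, where $k'$ is the algebraic closure of $\F_p$ in $E$. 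The paper takes primitive \emph{central} idempotents of $k'[N]$ --- these remain the primitive central idempotents of $E[N]$, are canonically permuted by $G$, and the one attached to the augmentation ideal is $G$-fixed, hence lies in $H$ --- and then observes that their coefficients, being algebraic over $\F_p$, are either $0$ or of valuation $0$, so the inequality $v_L(e_j\rho)\ge v_L(\rho)$ falls out of (\ref{Hopf-action}) with no lifting argument at all. (Your instinct that groups with few blocks are the delicate point is not wrong --- the count of central idempotents is precisely the sensitive step in any version of this argument --- but your proposed ``resolution'' via a non-central isotypic idempotent with ``careful tracking of coefficients'' is a statement of intent rather than a proof.)
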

\begin{proof}
Let $[L:K]=p^m r$ where $m \geq 0$ and where $r \geq 2$ is prime to
$p$. We have $H=E[N]^G$ where $G=\Gal(E/K)$ and, in view of the remark
before Proposition \ref{trace}, we may take $E$ to be a finite Galois
extension of $K$, containing $L$ and also containing a primitive $r$th
root of unity $\zeta_r$. Let $k'$ be the algebraic closure in $E$ of
the prime subfield $\F_p$. Thus $\zeta_r \in k'$.

Now let $t$ be the number of conjugacy classes in $N$ consisting of
elements whose order is prime to $p$. As $|N|=[L:K]$ is not a power of
$p$, we have $t \geq 2$. For any field $F$ of characteristic $p$ containing
$\zeta_r$, the group algebra $A=F[N]$ has exactly $t$
nonisomorphic simple modules \cite[\S18.2, Corollary 3]{Se-Reps}. Let
$J_A$ denote the Jacobson radical of $A$. Then the semisimple algebra
$A/J_A$ has exactly $t$ Wedderburn components, and therefore has
exactly $t$ primitive central idempotents. Since $A$ is a
finite-dimensional $F$-algebra, we may lift these idempotents  
from $A/J_A$ to $A$. Thus $A$ has exactly $t$ primitive
central idempotents, $\phi_1, \ldots, \phi_t$ say, and hence has $t$
maximal 2-sided ideals.  One of these, say the ideal $(1-\phi_1) A$
associated to $\phi_1$, is the augmentation ideal $I_A$.

Taking $F=k'$ in the previous paragraph, we obtain orthogonal
idempotents $\phi_1, \ldots, \phi_t \in k'[N]$. But $k' \subset E$,
and taking $F=E$, we find that $\phi_1, \ldots, \phi_t$ are again the
primitive central idempotents in $E[N]$. The action of $G$ on $E[N]$
permutes these idempotents, and fixes $\phi_1$ since it fixes the
augmentation ideal of $E[N]$. Hence $\phi_1 \in H$. Let $e_1=\phi_1$
and $e_2=1-\phi_1$. Then $e_1$, $e_2$ are orthogonal idempotents in $H
\cap k'[N]$ with $e_1+e_2=1$. Moreover $e_1 \neq 0$ by definition and
$e_2 \neq 0$ since $t \geq 2$.

We now show that $v_L(e_j \rho) \geq v_L( \rho)$ for $j=1$, $2$ and
for any $\rho \in L$.  Since $S/R$ is finite, $S$ is the unique
valuation ring of $L$ containing $R$. Thus each valuation ring $T$ of
$E$ containing $R$ must also contain $S$. (There may be several such
$T$ if $R$ is not complete.) Fix one of these valuation rings $T$ of
$E$, and let $v_E$ be the corresponding valuation on $E$. Then any
valuation $v'$ on $E$ with $v'(\mu)=v_E(\mu)$ for all $\mu \in K$
necessarily satisfies $v'(\rho)=v_E(\rho)$ for all $\rho \in L$.  In
particular, for each $g \in G$, the valuation $v_E \circ g$
on $E$ must have the same restriction to $L$ as $v_E$.  Thus, for each
$\eta \in N$, we have $v_E(\sigma_\eta(\rho)) = v_E(\rho)$ for all
$\rho \in L$.

For $j=1$ or $2$, let
$$ e_j = \sum_{\eta \in N} \lambda_\eta \eta  \quad \mbox{with }\lambda_\eta \in
k'. $$
Then, as $e_j \in H$, we have
$$ e_j(\rho)= \sum_{\eta \in N} \lambda_\eta \sigma_\eta(\rho) $$
by (\ref{Hopf-action}). But $\lambda_\eta$ is algebraic over $\Fp$, so
either $\lambda_\eta=0$ or $v_E(\lambda_\eta)=0$. We then have
$$ v_E(e_j \rho) \geq \min_{\eta \in N} (v_E(\lambda_\eta) +
v_E(\sigma_\eta(\rho))) \geq 0 + v_E(\rho). $$ 
As $\rho$, $e_j \rho \in L$, it follows that $v_L(e_j \rho) \geq
v_L(\rho)$ as required.
\end{proof}

We can now prove case (i) of Theorem \ref{main}(b).

\begin{corollary} \label{non-p-done}
Let $S/R$ be as in Theorem \ref{main}, and suppose that $[L:K]$ is not
a power of $p$. Then, for any $b \in \Z$, there exists some $\rho \in
L$ with $v_L(\rho)=b$ such that $\rho$ is not a normal basis generator
for $L/K$ with respect to $H$.
\end{corollary}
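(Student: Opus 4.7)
The plan is to exploit the orthogonal idempotent decomposition supplied by Lemma~\ref{non-p-power}. The idea is that any nonzero element of $H$ that annihilates $\rho$ prevents $\rho$ from being a normal basis generator, so it suffices to produce, for the given value $b$, some $\rho \in L$ with $v_L(\rho)=b$ that is killed by either $e_1$ or $e_2$.

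First I would pick any $\pi \in L$ with $v_L(\pi)=b$; such a $\pi$ exists since $v_L$ is surjective onto $\Z$. Writing $\pi = e_1\pi + e_2\pi$, Lemma~\ref{non-p-power} gives $v_L(e_j\pi) \geq v_L(\pi) = b$ for each $j$. Since the sum has valuation exactly $b$, the non-archimedean property forces $\min(v_L(e_1\pi), v_L(e_2\pi)) = b$, so there is some index $j \in \{1,2\}$ with $v_L(e_j\pi)=b$. Setting $\rho = e_j\pi$, we have an element of $L$ with the prescribed valuation.

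Next I would verify that $\rho$ is annihilated by the other idempotent. By orthogonality, $e_{3-j}\rho = e_{3-j}e_j\pi = 0$. Since $e_{3-j}$ is a nonzero element of $H$, the annihilator of $\rho$ in $H$ is nontrivial; because $L$ would be free of rank one over $H$ on any normal basis generator, $\rho$ cannot be one. This completes the proof.

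There is no real obstacle: Lemma~\ref{non-p-power} has already done the work of producing two orthogonal idempotents each of which can only increase valuations, and the corollary is just the observation that this property is incompatible with the existence of an integer certificate. The only point requiring any care is the simple ultrametric argument guaranteeing that one of the two projections of $\pi$ genuinely attains the valuation $b$, rather than accidentally being shifted upward.
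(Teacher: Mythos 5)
Your proof is correct and follows essentially the same route as the paper: decompose an arbitrary element of valuation $b$ using the idempotents from Lemma~\ref{non-p-power}, use the ultrametric inequality to see that one projection attains valuation $b$, and note that it is annihilated by the complementary (nonzero) idempotent. No gaps.
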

\begin{proof}
Take any $\rho' \in L$ with $v_L(\rho')=b$. With $e_1$, $e_2
\in H$ as in Lemma \ref{non-p-power}, we have 
$$ \rho' = e_1 \rho' + e_2 \rho', \qquad 
    v_L(e_1 \rho')\geq b, \quad  v_L(e_2 \rho') \geq b. $$
Both inequalities cannot be strict since $v_L(\rho')=b$, so without
    loss of generality we have $v_L(e_1 \rho')=b$. 
Set $\rho= e_1 \rho'$. Then $v_L(\rho)=b$ but $\rho$
cannot be a normal basis generator with respect to $H$, since $e_2
\rho=(e_2 e_1) \rho'=0$. 
\end{proof}

\section{Proof of Theorem 2}

For this section, the hypotheses of Theorem \ref{main} are in
force. In particular, $S/R$ is a finite extension of discrete
valuation rings of characteristic $p>0$, and the corresponding
extension of fields of fractions $L/K$ is separable of degree
$n$. Also, $L/K$ is $H$-Galois for some $K$-Hopf algebra $H$.

By Corollary \ref{non-p-done}, we may assume that $n=[L:K]$ is a
power of $p$. Let $e$ be the ramification index of $S/R$, let
$w=v_L(\D_{S/R})$, and let $\pi$ and $\Pi$ be uniformisers for $R$ and
$S$ respectively. By definition of the different, we have  
$$ \Tr_{L/K}(\Pi^{-w}S)  \subseteq R,  
     \qquad \Tr_{L/K}(\Pi^{-w-1} S ) \not \subseteq R, $$
and therefore
$$ \Tr_{L/K}(\Pi^{e-w}S)  \subseteq \pi R,  
     \qquad \Tr_{L/K}(\Pi^{e-w-1} S ) = R. $$
Hence there is some $x_1 \in L$ with $v_L(x_1)=e-w-1$ and
$\Tr_{L/K}(x_1)=1$. For $2 \leq i \leq e$, pick $x'_i \in L$ with
$v_L(x'_i)=e-w-i$, and set $x_i=x'_i-\Tr_{L/K}(x'_i) x_1$. Since
$\Tr_{L/K}(x'_i) \in R$ and $v_L(x'_i)<v_L(x_1)$, we have
\begin{equation} \label{x-val}
v_L( x_i ) = e-w-i \mbox{ for } 1 \leq i \leq e, 
\end{equation}
and clearly
\begin{equation} \label{x-trace}
  \Tr_{L/K}(x_i) = 
\begin{cases}
         1 & \text{if $i=1$;} \\
         0 & \text{otherwise.}
\end{cases}
\end{equation}

We first consider the totally ramified case $e=n$. Then
$x_1,\ldots, x_n$ is a $K$-basis for $L$, since the $v_L(x_i)$
represent all residue classes modulo $n$. 

Let $\rho \in L$ with
$v_L(\rho) \equiv -w-1 \pmod{n}$. We may write 
$$ \rho = \sum_{i=1}^n a_i x_i $$
with the $a_i \in K$. Then  
$v_L(\rho) = \min_i \{ nv_K(a_i)+(n-w-i)\}$. The hypothesis on $\rho$
means that the minimum must occur at $i=1$. In particular, $a_1
\neq 0$.  Then, by (\ref{x-trace}), we have 
$$ \Tr_{L/K}(\rho) = \sum_{i=1}^{n} a_i \Tr_{L/K}(x_i) = a_1 \neq 0, $$
and by Lemma \ref{t-gen}, $\rho$ is a normal basis generator for $L/K$
with respect to $H$. This completes the proof of Theorem
\ref{main}(a).

Next let $b \in \Z$ with $b \not \equiv -1-w \pmod{n}$.   
Then $b=n(s+1)-w-i$ with $2 \leq i \leq n$ and $s \in
\Z$. Set $\rho=\pi^s x_i$, so 
$v_L(\rho)=b$ by (\ref{x-val}). But $\Tr_{L/K}(\rho)=0$ by
(\ref{x-trace}), so that $\rho$ cannot be a normal basis generator by
Corollary \ref{not_NBG}. This completes the proof of Theorem
\ref{main} for totally ramified extensions. 

Finally, suppose that $S/R$ is not totally ramified. Given $b \in \Z$,
write $b=e(s+1)-w-i$ with $1 \leq i \leq e$ and $s \in \Z$. If $i \neq
1$ then $\rho=\pi^s x_i$ satisfies $v_L(\rho)=b$ and
$\Tr_{L/K}(\rho)=0$, so as before $\rho$ cannot be a normal basis
generator. It remains to consider the case $i=1$. Let $l$, $k$ be the
residue fields of $S$, $R$ respectively. Then $l/k$ has degree $f>1$
with $ef=n$. (Note, however, that $l/k$ need not be separable.) Pick
$\omega \in l$ with $\omega \not \in k$, let $\Omega \in S$ be any
element whose image in $l$ is $\omega$, and set
$$ \rho = \pi^s (\Omega-\Tr_{L/K}(x_1 \Omega)) x_1. $$
Then $\Tr_{L/K}(x_1 \Omega) \in \Tr_{L/K} (\D_{S/R}^{-1}) \subseteq
R$. Since $\omega$ and $1$ are elements of $l$ which are linearly
independent over $k$, it follows that $v_L(\Omega-\Tr_{L/K}(x_1
\Omega)) = v_L(\Omega)=0$, and hence $v_L(\rho)=es+v_L(x_1)=b$. But
once more we have $\Tr_{L/K}(\rho)=0$, so that $\rho$ cannot be a
normal basis generator for $L/K$ with respect to $H$. This concludes
the proof of Theorem \ref{main}.

\section{An example}

We end with an example of a family of extensions $L/K$ which are $H$-Galois
for a suitable Hopf algebra $H$, but which are not Galois. Theorem 2 will
give an integer certificate for normal basis generators in $L/K$,
although Theorem 1 is not applicable.

Fix a prime number $p$, and let $K=\Fp((T))$ be the field of formal
Laurent series over the finite field $\Fp$ of $p$ elements. Then $K$
is complete with respect to the discrete valuation $v_K$ such that
$v_K(T)=1$, and the valuation ring is $R=\Fp[[T]]$. Take any integer
$f \geq 2$, and set $q=p^f$. Let $b>0$ be an integer which is not divisible
by $p$, and let $\alpha \in K$ be any element with $v_K(\alpha)=-b$.
The field we consider is $L=K(\theta)$, where $\theta$ is a root of
the polynomial $g(X)=X^q-X-\alpha \in K[X]$. 

To see that $L$ is not Galois over $K$, consider the unramified
extension $F=\Fq K$ of $K$ (where $\Fq$ is the field of $q$ elements),
and let $E=LF$. Then $E$ is the splitting field of $g$ over $K$, and
the roots of $g$ in $E$ are $\{ \theta + \omega \mid \omega \in
\Fq\}$. Thus $E$ is the Galois closure of $L/K$, and it follows in
particular that $L/K$ is not Galois. We are therefore in the situation
of \S2, with $G=\Gal(E/K)$ of order $fq$, and with $G'=\Gal(E/L) \cong
\Gal(F/K)\cong \Gal(\Fq/\Fp)$ cyclic of order $f$. Moreover, $G'$ has
a normal complement $N=\Gal(E/F) \cong \Fq$ in $G$. Thus
$G \cong N \rtimes G'$ (and, since $\F_q/\F_p$ has a
normal basis, it is easy to see that any generator of $G'$ acts on $N$
with minimal polynomial $X^f-1$). In the
terminology of \cite[\S4]{GP}, $L/K$ is an almost classically Galois
extension. It therefore admits at least one Hopf-Galois structure,
namely that corresponding to the group $N$.

Now $E/F$ is totally ramified of degree $q$, and the ramification
filtration of $\Gal(E/F)$ has only one break, occurring at $b$. Hence,
by Hilbert's formula \cite[IV, Prop.~4]{Se-CL}, $v_E(\D_{E/F})=(b+1)(q-1)$. As
$E/L$ and $F/K$ are unramified, it follows that $L/K$ is totally
ramified, and, using the transitivity of the different \cite[III,
Prop.~8]{Se-CL},
that $v_L(\D_{L/K})=(b+1)(q-1)$. Thus Theorem \ref{main}(a) applies
with $w \equiv -1-b \pmod{q}$. Hence any $\rho \in L$ with $v_L(\rho)
\equiv b \pmod{q}$ is a normal basis generator with respect to {\em any}
Hopf-Galois structure on $L/K$.

Following a suggestion of the referee, we specialise this example
further. Let us take $b=q-1$ and $\alpha=T^{1-q}$. Then
$v_L(\theta)=1-q$. We obtain a uniformising parameter for $S$ by
seting $\eta=T\theta$. Then $\eta$ is a root of the Eisenstein polynomial
$X^q-T^{q-1}X-T$, so $\D_{L/K}$ is generated by $T^{q-1}$ and $w \equiv
0 \pmod{q}$. Hence any element $\rho$ of $L$ with $v_L(\rho) \equiv -1
\pmod{q}$ is a normal basis generator with respect to any Hopf-Galois
structure on $L/K$. This can easily be verified directly for
$\rho=\eta^{q-1}$ and the Hopf-Galois structure corresponding to $N$
as above. Indeed, let $\sigma_\omega$ be the element of $N=\Gal(E/F)$
corresponding to $\omega \in \F_q$, so $\sigma_\omega(\eta)=\eta +
\omega T$. We first claim that $\eta^{q-1}$ is a normal basis generator
for the Galois extension $E/F$, or equivalently, that $F[N] \cdot
\eta^{q-1}=E$. We have 
$$  \sigma_\omega(\eta^{q-1}) = (\eta+\omega T)^{q-1} = 
   \sum_{i=0}^{q-1} \eta^{q-1-i}(-\omega T)^i, $$
so the claim follows from the non-vanishing of the Vandermonde matrix
$\big((-\omega)^i\big)_{\omega \in \F_q, 0 \leq i <q}$. 
Since the $F[N]$-module $E$ is free on the generator $\eta^{q-1}$, and
$H=F[N]^G$ is a $K$-subalgebra of $F[N]$, it follows that $H \cdot
\eta^{q-1}$ has dimension $\dim_K(H)=q=[L:K]$ over $K$. But $\eta \in L$
and $H \cdot L = L$, so we must have $H \cdot \eta^{q-1}=L$. Thus
$\eta^{q-1}$ is a normal basis generator for $L/K$ over $H$, as required. 

\begin{remark}[Galois extensions] 
If we apply the preceding construction starting with $\F_q((T))$ rather
than $\F_p((T))$ (that is, we just consider the extension $E/F$ above) then
we obtain a {\em Galois} (indeed, abelian) extension of degree $q$ for
which we have given a direct verification that $\eta^{q-1}$ is a
normal basis generator. This provides an explict example of the
situation considered in \cite{T}
\end{remark}

\begin{remark}[Global examples]
We can easily adapt the above arguments to the case where 
$K$ is not complete. Let $K$ be a function field of
dimension 1 with field of constants $\Fp$, and choose any valuation
$v_K$ on $K$ which corresponds to a place of $K$ with residue field
$\F_p$. With $q$, $b$ and $\alpha$ as above, let 
$L=K(\theta)$ where $\theta^q-\theta=\alpha$. Then the extension $L/K$
has degree $q$ and is a totally ramified at $v_K$. As before, $L/K$ is
not Galois but does admit at least one Hopf-Galois structure, and
Theorem \ref{main}(a) shows that any $\rho \in
L$ with $v_L(\rho) \equiv b \pmod{q}$ is a normal basis generator for
$L/K$ with respect to any Hopf-Galois structure on $L/K$.
\end{remark}

\end{document}